\newcommand{\wis}[1]{{\text{\em \usefont{OT1}{cmtt}{m}{n} #1}}}
\newcommand{\C}{\mathbb{C}}
\newcommand{\vtx}[1]{*+[o][F-]{\scriptscriptstyle #1}}
\newcommand{\Oscr}{\mathcal{O}}
\newtheorem{definition}{Definition}
\newtheorem{theorem}{Theorem}
\newtheorem{lemma}{Lemma}
\newtheorem{example}{Example}
\title{Representation stacks, D-branes and noncommutative geometry}
\author{Lieven Le Bruyn} 
\address{Department of Mathematics, University of Antwerp \\ 
 Middelheimlaan 1, B-2020 Antwerp (Belgium) \\ {\tt lieven.lebruyn@ua.ac.be}}
\begin{document}
\sloppy
\UseAllTwocells

\maketitle

\begin{abstract}
In this note we prove that the $\wis{spec}(C)$-points of the representation Artin-stack $[\wis{rep}_n R/PGL_n]$ of $n$-dimensional representations of an affine $\C$-algebra $R$ correspond to $\C$-algebra morphisms $R \rTo A_n$ where $A_n$ is an Azumaya algebra of degree $n$ over $C$. We connect this to the theory of D-branes and Azumaya noncommutative geometry, developed by Chien-Hao Liu and Shing-Tung Yau in a series of papers \cite{LY1}-\cite{LY6}. 
\end{abstract}

\section{Representation stacks}

Throughout, all algebras will be associative, unital $\C$-algebras which are finitely generated, that is, have a presentation
\[
R = \frac{\C \langle x_1,\hdots,x_k \rangle}{(p_i(x_1,\hdots,x_k)~|~ i \in I)} \]
With $\wis{rep}_n~R$ we will denote the affine scheme of all $n$-dimensional representations of $R$. Its coordinate ring is the quotient of the polynomial algebra in all entries of $k$ generic $n \times n$ matrices
\[
X_i = \begin{bmatrix} x_{11}(i) & \hdots & x_{1n}(i) \\
\vdots & & \vdots \\
x_{n1}(i) & \hdots & x_{nn}(i) \end{bmatrix} \]
modulo the ideal generated by all the matrix-entries of the $n \times n$ matrices $p_i(X_1,\hdots,X_k)$ for all $i \in I$. That is,
\[
\Oscr(\wis{rep}_n~R) = \frac{\C[x_{ij}(l)~:~1 \leq i,j \leq n, 1 \leq l \leq k]}{(p_i(X_1,\hdots,X_k)_{uv}~:~1 \leq u,v \leq n,~i \in I)} \]
The affine group scheme $GL_n$ (or rather $PGL_n$) acts via simultaneous conjugation on the generic matrices $X_i$ and hence on the affine scheme $\wis{rep}_n~R$, its orbits corresponding to isomorphism classes of $n$-dimensional representations of $R$. It is well-known that the GIT-quotient $\wis{rep}_n R/PGL_n$, that is the affine scheme corresponding to the ring of polynomial invariants $\Oscr(\wis{rep}_n R)^{PGL_n}$, classifies isomorphism classes of {\em semi-simple} $n$-dimensional representations of $R$, see \cite{ProcesiCH} or \cite[Chp. 2]{LBbook}.

In order to investigate all orbits, one considers the $n$-dimensional {\em representation stack} $[\wis{rep}_n R / PGL_n ]$. By this we mean (following \cite{deJong}) the category with  objects all triples $(Z,P,\phi)$ where $Z$ a scheme over $\C$, $\pi_P : P \rOnto Z$ a $PGL_n$-torsor in the \'etale topology
\[
\xymatrix{
P \ar[rr]^{\phi} \ar[d]^{\pi_P} & & \wis{rep}_n R \\
Z & & }
\]
and with $\phi$ a $PGL_n$-equivariant map. Morphisms in this category are pairs $(f,h) : (Z,P,\phi) \rTo (Z',P',\phi')$ where $f : Z \rTo Z'$ is a morphism
\[
\xymatrix{P \ar[rr]^{\phi} \ar[rd]^h \ar[d]^{\pi_P} & & \wis{rep}_n R \\
Z \ar[rd]_f & P' \ar[ru]_{\phi'} \ar[d]^{\pi_{P'}} & \\
& Z' &}
\]
and $h$ is a $PGL_{n}$-equivariant map such that $P \simeq Z \times_{Z'} P'$ and $\phi = \phi' \circ h$. 

The full subcategory consisting of all triples $(Z,P,\phi)$ with fixed $Z$ are called the $Z$-points of the stack and are denoted by $[ \wis{rep}_{n} R / PGL_n ](Z)$. Observe that this subcategory is a groupoid, that is, all its morphisms are isomorphisms.

\begin{theorem} For a commutative affine $\C$-algebra $C$, the $\wis{spec}(C)$-points of the representation stack $[\wis{rep}_n R/PGL_n]$ are in natural one-to-one correspondence with $\C$-algebra morphisms $R \rTo A_n$ where $A_n$ is an Azumaya algebra of degree $n$ with center $C$.
\end{theorem}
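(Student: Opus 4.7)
The proof rests on two classical dictionaries. First, étale $PGL_n$-torsors $\pi_P : P \rOnto \wis{spec}(C)$ are the same data as Azumaya algebras of degree $n$ with centre $C$: to $P$ one associates the twisted matrix algebra
\[
A_P := \bigl(\pi_{P,*}\Oscr_P \otimes_C M_n(\C)\bigr)^{PGL_n},
\]
where $PGL_n$ acts through the torsor on the first factor and by conjugation on $M_n(\C)$. Étale-local triviality of $P$ makes $A_P$ étale-locally isomorphic to $M_n(\Oscr_C)$, hence Azumaya of degree $n$, and conversely every such Azumaya $C$-algebra $A$ is recovered from its torsor of trivialisations $\underline{\text{Isom}}_{C\text{-alg}}(M_n(\Oscr_C),A)$; the two assignments are mutually inverse on isomorphism classes. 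The second dictionary is tautological: by the presentation of $\wis{rep}_n R$ recalled in the excerpt, a morphism $\wis{spec}(B) \rTo \wis{rep}_n R$ is the same as a $\C$-algebra morphism $R \rTo M_n(B)$ for any commutative $\C$-algebra $B$.

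Given a $\wis{spec}(C)$-point $(P,\phi)$, form $A_P$ as above. Applied to $B = \Oscr(P)$ the second dictionary turns $\phi$ into a $\C$-algebra morphism $\widetilde{\phi} : R \rTo M_n(\Oscr(P)) = \Oscr(P) \otimes_\C M_n(\C)$, specified concretely by the $k$-tuple of pull-backs of the generic matrices $X_1,\hdots,X_k$. The $PGL_n$-equivariance of $\phi$, with $PGL_n$ acting on $\wis{rep}_n R$ by simultaneous conjugation, translates exactly into the assertion that $\widetilde{\phi}$ factors through the $PGL_n$-invariants $(\Oscr(P) \otimes_\C M_n(\C))^{PGL_n}$, which by the first dictionary equal $A_P$. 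Hence $(P,\phi)$ yields a canonical $\C$-algebra morphism $R \rTo A_P$. The assignment is reversible: given an Azumaya algebra $A_n$ of degree $n$ with centre $C$ and a $\C$-algebra morphism $R \rTo A_n$, form the trivialisation torsor $P_{A_n}$ and read the images of the generators of $R$, now lying in $A_n \subset \Oscr(P_{A_n}) \otimes_\C M_n(\C)$, as a $PGL_n$-invariant $k$-tuple of matrices, which is exactly a $PGL_n$-equivariant morphism $P_{A_n} \rTo \wis{rep}_n R$.

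Morphisms in the groupoid $[\wis{rep}_n R/PGL_n](\wis{spec}(C))$ are isomorphisms of $PGL_n$-torsors intertwining the maps to $\wis{rep}_n R$; under the first dictionary these correspond to $C$-algebra isomorphisms of the associated Azumaya algebras intertwining the two maps from $R$, so the bijection upgrades to an equivalence of groupoids and is natural in $C$. The principal obstacle is really the first dictionary itself: the descent argument identifying étale $PGL_n$-torsors over $\wis{spec}(C)$ with degree-$n$ Azumaya algebras of centre $C$, together with the identification of the $PGL_n$-invariants of $\Oscr(P) \otimes_\C M_n(\C)$ with global sections of $A_P$. This is classical and I would invoke it from standard references on the Brauer group rather than reprove it; once granted, the substance of the theorem reduces to the clean observation that passing from the GIT quotient $\wis{rep}_n R/PGL_n$ to the quotient stack $[\wis{rep}_n R/PGL_n]$ replaces \emph{representations of $R$ into the split matrix algebra $M_n(C)$} by \emph{representations of $R$ into an Azumaya $C$-algebra of degree $n$}.
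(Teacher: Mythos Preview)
Your proof is correct and shares its skeleton with the paper's, but the packaging differs in an instructive way. Both arguments start from the same classical fact --- \'etale $PGL_n$-torsors over $\wis{spec}(C)$ are the same as degree-$n$ Azumaya algebras with centre $C$ --- and both then convert a $PGL_n$-equivariant map $P \rTo \wis{rep}_n R$ into an algebra map $R \rTo A$. Where you do this directly, reading the equivariance of $\phi$ as the statement that $\widetilde{\phi}$ lands in the descent invariants $(\Oscr(P) \otimes_\C M_n(\C))^{PGL_n} = A_P$, the paper instead routes the computation through Procesi's theory of Cayley-Hamilton algebras: it identifies the torsor as $\wis{rep}_n A_n$, invokes the fact that the ring of $PGL_n$-equivariant maps $\wis{rep}_n(-) \rTo M_n(\C)$ is the universal trace algebra $\int_n(-)$, notes that $\int_n A_n = A_n$ for an Azumaya algebra, so that $\phi$ induces a trace-preserving map $\int_n R \rTo A_n$, and finally precomposes with the universal $R \rTo \int_n R$. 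Your route is more elementary and self-contained, needing only faithfully flat descent; the paper's route earns its keep later, since the $\int_n$ and $\oint_n$ formalism is reused throughout the discussion of D-branes and noncommutative structure sheaves.
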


Before we can prove this result, we need to recall some facts on Cayley-Hamilton algebras, see \cite{ProcesiCH} and \cite{LBbook}, and on Azumaya algebras, see \cite{DeMeyerIngraham} and \cite{KnusOjanguren}.

 Every $\C$-algebra $R$ has a universal trace map $n_R~:~R \rTo R/[R,R]_v$ where $[R,R]_v$ is the sub-vectorspace of $R$ spanned by all commutators $[r,s]=rs-sr$ in $R$. It allows us to define the {\em necklace functor}
 \[
 \oint~:~\wis{alg} \rTo \wis{commalg} \]
 which assign to any $\C$-algebra $R$ its necklace algebra $\oint R = Sym(\tfrac{R}{[R,R]_v})$ where for any $\C$-vectorspace $V$ we denote by $Sym(V)$ the symmetric algebra on $V$.
 
 With $\wis{alg@}$ we denote the category of $\C$-algebras {\em with trace}. That is, a $\C$-algebra $R$ belongs to $\wis{alg@}$ if it has a linear map $tr~:~R \rTo R$ satisfying the following properties for all $r,s \in R$
 \[
 \begin{cases}
 tr(r)s=str(r) \\
 tr(rs)=tr(sr) \\
 tr(tr(r)s) = tr(r)tr(s)
 \end{cases}
 \]
 In particular, it follows that the image of the trace map is contained in the center $Z(R)$. Morphisms in $\wis{alg@}$ are trace preserving $\C$-algebra morphisms. The forgetful functor $\wis{alg@} \rInto \wis{alg}$ has a left adjoint, called the {\em trace algebra functor}
 \[
\int~:~ \wis{alg} \rTo \wis{alg@} \qquad R \mapsto \int R = \oint R \otimes R \]
with the trace map on $\int R$ defined by $tr(c \otimes r) = c n_R(r) \otimes 1$, see \cite{LBbook}. That is, for any $\C$-algebra $R$ and any $\C$-algebra with trace $A$,  there is a natural one-to-one correspondence 
\[
Hom_{\wis{alg@}}(\int R,A) \rTo Hom_{\wis{alg}}(R,A) \qquad \text{given by} \qquad f \mapsto f \circ n_R \]
Let  $M \in M_n(\C)$ be a diagonalizable matrix with distinct eigenvalues $\{ \lambda_1,\hdots,\lambda_n \}$, then the characteristic polynomial has as coefficients the elementary symmetric functions $\sigma_i = \sigma_i(\lambda_1,\hdots,\lambda_n)$. Another generating set of the ring of symmetric functions is given by the power sums $s_i = \lambda_1^i + \hdots + \lambda_n^i$, hence there exist uniquely determined polynomials $p_i$ such that 
\[
\sigma_i(\lambda_1,\hdots,\lambda_n) = p_i(s_1,\hdots,s_n) = p_i(Tr(M),\hdots,Tr(M^i)) \]
because $s_i = Tr(M^i)$. 

This allows us to define for an algebra with trace $(R,tr_R) \in \wis{alg@}$ the formal Cayley-Hamilton polynomial of degree $n$ for all elements $a \in R$ by
\[
\chi_a^{(n)}(t) = t^n + p_1(tr_R(a)) t^{n-1} + p_2(tr_R(a),tr_R(a^2)) t^{n-2} + \hdots + p_n(tr_R(a),\hdots,tr_R(a^n)) \]
With $\wis{alg@n}$ we denote the category of all Cayley-Hamilton algebras of degree n, that is, having as its objects algebras with trace map $(R,tr_R)$ satisfying $tr_R(1)=n$ and $\chi_a^{(n)}(a) = 0$ for all $a \in R$, and, trace preserving $\C$-algebra maps as morphisms. We have functors
\[
\begin{cases}
\int_n~:~\wis{alg} \rTo \wis{alg@n} &\qquad R \mapsto \frac{\int R}{(tr_R(1)-n, \chi_a^{(n)}(a)~\forall a \in \int R)} \\
\oint_n~:~\wis{alg} \rTo \wis{commalg} &\qquad R \mapsto tr_{\int_n R}(\int_n R)
\end{cases}
\]
The main structural results on the trace algebras $\int_n R$ and $\oint_n R$ and their invariant theoretic interpretation are summarized in the next theorem, due to Claudio Procesi. For a proof and more details the reader is referred to \cite[Chp. 2]{LBbook} or to the original paper \cite{ProcesiCH}.

\begin{theorem}[Procesi] \label{Procesi} Let $R$ be a finitely generated $\C$-algebra. Then, with notations as before
\begin{enumerate}
\item{$\oint_n R$ is an affine commutative $\C$-algebra}
\item{$\int_n R$ is a finitely generated module over $\oint_n R$}
\item{$\oint_n R$ is the coordinate ring of the GIT-quotient scheme $\wis{rep}_n~R / PGL_n$}
\item{$\int_n R$ is the ring of all $PGL_n$-equivariant maps $\wis{rep}_n~R \rTo M_n(\C)$}
\end{enumerate}
\end{theorem}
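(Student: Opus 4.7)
The plan is to construct mutually inverse assignments between triples $(\wis{spec}(C),P,\phi)$ representing $\wis{spec}(C)$-points of the stack and pairs $(A_n, f : R \rTo A_n)$ with $A_n$ an Azumaya algebra of degree $n$ over $C$. Two inputs are central: (i) \'etale descent for $PGL_n$-torsors on $\wis{spec}(C)$, which classifies such torsors by Azumaya algebras of degree $n$ with center $C$, since $PGL_n$ is the automorphism group scheme of $M_n(\C)$; and (ii) Procesi's Theorem \ref{Procesi}, especially the identification of $\int_n R$ with the $PGL_n$-equivariant morphisms $\wis{rep}_n R \rTo M_n(\C)$, together with the adjunction $Hom_{\wis{alg}}(R,A) = Hom_{\wis{alg@n}}(\int_n R, A)$ valid for $A$ Cayley-Hamilton of degree $n$ -- in particular for $A$ Azumaya of degree $n$, equipped with its reduced trace.

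For the forward direction, given a $PGL_n$-torsor $\pi_P : P \rOnto \wis{spec}(C)$, I would form the Azumaya algebra as the twist $A_n := (\pi_{P*}\Oscr_P \otimes_\C M_n(\C))^{PGL_n}$. Composition with the equivariant map $\phi : P \rTo \wis{rep}_n R$ sends $PGL_n$-equivariant morphisms $\wis{rep}_n R \rTo M_n(\C)$ to $PGL_n$-equivariant morphisms $P \rTo M_n(\C)$; by part (4) of Procesi the source of this map is $\int_n R$ and the target is, by construction, $A_n$. This produces a trace-preserving morphism $\int_n R \rTo A_n$, and precomposing with the universal trace map $R \rTo \int_n R$ yields the required $f : R \rTo A_n$.

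Conversely, given $f : R \rTo A_n$ with $A_n$ Azumaya of degree $n$ over $C$, let $P := \underline{Isom}(M_n(\Oscr), A_n)$ be the $PGL_n$-torsor of \'etale-local splittings, so that $A_n \otimes_C \Oscr(P) \simeq M_n(\Oscr(P))$ $PGL_n$-equivariantly. Composing the base change of $f$ with this isomorphism gives a $\C$-algebra morphism $R \rTo M_n(\Oscr(P))$, equivalently an $\Oscr(P)$-valued point of $\wis{rep}_n R$, that is, a morphism $\phi : P \rTo \wis{rep}_n R$; equivariance of $\phi$ is inherited from that of the splitting data.

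The main obstacle will be checking that the two passages are mutually inverse and natural in $C$, and in particular that the trace inherited by $A_n$ from the adjunction through $\int_n R$ coincides with the reduced Azumaya trace. This Cayley-Hamilton compatibility is what welds the torsor-descent side to the trace-algebra side of Procesi's theorem; once in hand, it also ensures that isomorphisms of triples in the groupoid $[\wis{rep}_n R / PGL_n](\wis{spec}(C))$ match isomorphisms of pairs $(A_n, f)$, giving the claimed natural bijection on isomorphism classes.
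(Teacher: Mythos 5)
Your proposal does not address the statement you were asked to prove. The statement is Procesi's structure theorem for the trace algebras $\int_n R$ and $\oint_n R$ --- that $\oint_n R$ is an affine commutative algebra, that $\int_n R$ is a finite module over it, and that these algebras are, respectively, the coordinate ring of the GIT-quotient and the ring of $PGL_n$-equivariant maps $\wis{rep}_n R \to M_n(\C)$. What you have written is instead an argument for Theorem~1 of the paper (the identification of $\wis{spec}(C)$-points of the stack $[\wis{rep}_n R/PGL_n]$ with morphisms $R \to A_n$ to degree-$n$ Azumaya algebras), and it explicitly \emph{uses} the statement in question as an ingredient: you invoke ``part (4) of Procesi'' to identify $\int_n R$ with the equivariant maps, and the adjunction $Hom_{\wis{alg}}(R,A)=Hom_{\wis{alg@n}}(\int_n R,A)$ on top of it. As a proof of the stated theorem this is therefore circular, and more plainly it is simply a proof of a different assertion.

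For the record, the paper itself gives no proof of this theorem either; it attributes the result to Procesi and refers the reader to \cite{ProcesiCH} and \cite[Chp.~2]{LBbook}. An actual proof runs along entirely different lines from anything in your text: one embeds $\int_n R$ into $M_n(\Oscr(\wis{rep}_n R))$ via the generic matrices, identifies the image of $\oint_n R$ with the invariant ring $\Oscr(\wis{rep}_n R)^{PGL_n}$ by the first fundamental theorem of matrix invariants (invariants are generated by traces of monomials in generic matrices), deduces affineness of the invariant ring from reductivity of $GL_n$ (Hilbert--Nagata), obtains module-finiteness from the Cayley--Hamilton relations bounding the degrees of the required trace generators, and identifies $\int_n R$ with the ring of $PGL_n$-equivariant maps $\wis{rep}_n R \to M_n(\C)$ via the fundamental theorem for matrix concomitants. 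None of these ingredients appears in your proposal; as submitted it cannot be salvaged into a proof of the present statement, whatever its merits as a sketch of Theorem~1.
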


{\em Azumaya algebras} form an important class of Cayley-Hamilton algebras. For a $C$-algebra $R$ denote its enveloping algebra $R^e = R \otimes_C R^{op}$, where $R^{op}$ is the $C$-algebra with opposite multiplication map. There is a natural $C$-algebra morphism
\[
j_R~:~R^e = R \otimes_C R^{op} \rTo End_C(R) \qquad j_R(a \otimes b)r = arb \]

\begin{definition} A $C$-algebra $A$ is called an {\em Azumaya algebra} iff $A$ is a finitely generated projective $C$-module and the map $j_A$ is an isomorphism of $C$-algebras.
\end{definition}

It follows that the center $Z(A)$ of $A$ is equal to $C$ and that $A/\mathfrak{m}A \simeq M_n(\C)$ for every maximal ideal $\mathfrak{m}$ of $C$ where $n^2$ is the local rank of $A$ at $\mathfrak{m}$. If the local rank is constant  and equal to $n^2$ we say that $A$ is an Azumaya algebra {\em of degree $n$}. We recall the geometry associated to $C$-Azumaya algebras of degree $n$.

\begin{lemma} $C$-Azumaya algebras of degree $n$ are classified up to $C$-algebra isomorphism by the \'etale cohomology group
$H^1_{et}(\wis{spec}(C),PGL_n)$, that is, $C$-Azumaya algebras of degree $n$ correspond to principal $PGL_n$-fibrations over $\wis{spec}(C)$.
\end{lemma}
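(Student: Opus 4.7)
The plan is to deduce the lemma from the general principle that twisted forms of an algebraic object are classified by the first \'etale cohomology of its automorphism sheaf, together with two classical inputs: the \'etale-local triviality of Azumaya algebras and the Skolem--Noether description of the automorphism group of a matrix algebra.

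First I would show that every Azumaya algebra $A$ of degree $n$ over $C$ is an \'etale-local form of $M_n(C)$, meaning that there exists a faithfully flat \'etale cover $C \rTo C'$ such that $A \otimes_C C' \simeq M_n(C')$ as $C'$-algebras. The standard argument proceeds pointwise: at every maximal ideal $\mathfrak{m}$ of $C$ one has $A/\mathfrak{m}A \simeq M_n(\C)$ (a consequence of the Azumaya condition), so one can lift a separating idempotent (or a splitting of the trace pairing) to the henselization $C_\mathfrak{m}^h$ and then descend to a Zariski neighbourhood in some \'etale extension. This is the main technical input, and I would invoke it from \cite{KnusOjanguren} or \cite{DeMeyerIngraham} rather than reprove it.

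Next, for each $C$-algebra $A$ of finite rank $n^2$ as a $C$-module one introduces the sheaf of isomorphisms $\underline{\text{Isom}}_{C-\text{alg}}(M_n(C),A)$ on the small \'etale site of $\wis{spec}(C)$. By the previous step this sheaf is \'etale-locally nonempty whenever $A$ is Azumaya of degree $n$; by Skolem--Noether applied over every \'etale local ring, the inner automorphism group of $M_n$ is representable by the affine group scheme $PGL_n$ and acts simply transitively on the fibers. Hence $\underline{\text{Isom}}_{C-\text{alg}}(M_n(C),A)$ is a $PGL_n$-torsor for the \'etale topology on $\wis{spec}(C)$, and its class lives in $H^1_{et}(\wis{spec}(C),PGL_n)$.

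Conversely, given an \'etale $PGL_n$-torsor $\pi_P : P \rOnto \wis{spec}(C)$, one twists the trivial matrix algebra by forming the contracted product $P \times^{PGL_n} M_n$, where $PGL_n$ acts on $M_n$ by inner automorphisms. \'Etale descent of quasi-coherent algebras produces a $C$-algebra $A_P$ which is \'etale-locally isomorphic to $M_n$, hence is Azumaya of degree $n$. I would finish by checking that the two constructions $A \mapsto \underline{\text{Isom}}(M_n,A)$ and $P \mapsto A_P$ are mutually inverse up to $C$-algebra isomorphism and cohomologous torsor, which is formal once the functoriality and descent are in place. The only genuine obstacle is the \'etale-local triviality in the first step; everything else is a packaging of descent theory and Skolem--Noether.
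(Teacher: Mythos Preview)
Your proposal is correct and follows essentially the same approach as the paper: \'etale-local triviality of Azumaya algebras (cited from \cite{KnusOjanguren}) together with $Aut(M_n)=PGL_n$ (Skolem--Noether) so that isomorphism classes of twisted forms are classified by $H^1_{et}(\wis{spec}(C),PGL_n)$. The paper simply invokes these two ingredients and refers to \cite{Milne} for the descent formalism, whereas you spell out both directions of the correspondence explicitly; there is no substantive difference.
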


\begin{proof} It is well known (see for example \cite{KnusOjanguren}) that $A$ is a $C$-Azumaya algebra of degree $n$ if and only if there exist \'etale extensions $C \rTo D$ splitting $A$, that is, such that $A \otimes_C D \simeq M_n(D)$. As $Aut(M_n(D)) = PGL_n(D)$, the isoclasses of such algebras are classified by the claimed cohomology group, see for example \cite{Milne}.
\end{proof}

The principal $PGL_n$-fibration corresponding to the degree $n$ $C$-Azumaya algebra $A$ is the representation scheme
\[
\wis{rep}_n A \rOnto \wis{rep}_n A / PGL_n = \wis{spec}(C) \]
as all finite dimensional simple $A$-representation have to be $n$-dimensional because $A/\mathfrak{m}A = M_n(\C)$. By \'etale descent, $A$ has a reduced trace map with $tr(A) = C$, and hence we deduce from theorem~\ref{Procesi} that $A \in \wis{alg@n}$ and 
\[
\int_n A = A \qquad \text{and} \qquad \oint_n A = C \]
We have now all the tools needed to prove theorem~1.

\vskip 4mm
\noindent
{\bf Proof of theorem~1 : } Every $\C$-algebra morphism $\beta : R \rTo A_n$ induces a $PGL_n$-equivariant map
\[
\beta^*~:~\wis{rep}_n A_n \rTo \wis{rep}_n R \]
by composition. By the above remarks we know that the GIT-quotient
\[
\wis{rep}_n A_n \rOnto^{\pi} \wis{rep}_n~A_n/PGL_n = \wis{spec}(C) \]
is a $PGL_n$-torsor. Therefore $\beta : R \rTo A_n$ determines the $\wis{spec}(C)$-point of the representation stack
\[
\xymatrix{ \wis{rep}_n A_n  \ar[d]^{\pi} \ar[r]^{\beta^*} & \wis{rep}_n R \\
\wis{spec}(C) &}
\]
Conversely, $PGL_n$-torsors over $\wis{spec}(C)$ are classified by the pointed set
\[
H^1_{et}(\wis{spec}(C),PGL_n) \]
which also classifies the isomorphism classes of Azumaya algebras of degree $n$ with center $C$, see for example \cite[p. 134]{Milne}. Hence, any $\wis{spec}(C)$-point of the representation stack $[\wis{rep}_n R/PGL_n]$ is of the form
\[
\xymatrix{ \wis{rep}_n A_n  \ar[d]^{\pi} \ar[r]^{\phi} & \wis{rep}_n R \\
\wis{spec}(C) &}
\]
Taking $PGL_n$-equivariant maps to $M_n(\C)$ on both sides of the $PGL_n$-equivariant map $\phi$ gives us by theorem~\ref{Procesi} and the remarks above a trace preserving algebra morphism
\[
\int_n R \rTo^{\phi_*} \int_n A_n = A_n \]
and composing this with the universal morphism $R \rTo \int_n R$ we obtain the desired $\C$-algebra morphism $R \rTo^{\beta} A_n$ which induces $\phi = \beta^*$.

\section{Algebraic D-branes}

In this section we relate the above to the description of D-branes via Azumaya noncommutative geometry as developed by Chian-Hao Liu and Shing-Tung Yau in \cite{LY1}-\cite{LY6}.

Let the affine variety $X$ be an affine open piece of (a spatial slice of) space-time and let $Z$ be a closed subscheme giving the boundary conditions for the endpoints of open strings (a D-brane). Wrapping one brane around $Z$ gives a $\C^*$-bundle (or a $U(1)$-symmetry) and the embedding of the D-brane in space-time $X$ corresponds to the quotient map of the corresponding coordinate rings
\[
\beta_1 : \Oscr(X) \rOnto \Oscr(Z) \]
However, if there are $n$ branes wrapped around $Z$, then $Z$ comes equipped with a $GL_n$-bundle $P$ (or a $U(n)$-symmetry) and the so called {\em Polchinski Ansatz}, see for example \cite{LY1}, asserts that the embedding of the $n$-stack of branes in space time is now governed by a $\C$-algebra morphism
\[
\beta  : \Oscr(X) \rTo End_{\Oscr(Z)}(P) \]
where $P$ is the so-called Chan-Patton bundle on $Z$. Observe that $End_{\Oscr(Z)}(P)$ is a trivial Azumaya algebra of degree $n$ with center $\Oscr(Z)$ and hence the embedding morphism $\beta$ gives a $Z$-point of the representation stack $[\wis{rep}_n \Oscr(X)/PGL_n]$.

 In more general situations, for example when the $B$-field is turned on, one may replace the trivial Azumaya algebra $End_{\Oscr(Z)}(P)$ by a non-trivial Azumaya algebra $A_n$ of degree $n$ with center $\Oscr(Z)$. Sometimes, one even allows for a noncommutative space time $R$. Motivated by this, we formalize D-branes in a purely algebraic context.

\begin{definition} For $R$ an affine $\C$-algebra and $Z$ an affine scheme we will call a $Z$-point of the representation stack
\[
\beta \in [\wis{rep}_n R/PGL_n](Z) \]
an {\em algebraic D-brane} of degree $n$ on $Z$.
\end{definition}

The upshot of this interpretation of a D-brane embedding as a $Z$-point in the representation stack $[\wis{rep}_n \Oscr(X)/PGL_n]$ (or more generally $[\wis{rep}_n R/PGL_n]$) is that one expects good properties of D-branes whenever the representation stack is smooth, in particular when $\wis{rep}_n R$ is a smooth affine variety.

This connects the study of D-branes to that of Cayley-smooth orders and formally smooth algebras as described in detail in the book \cite{LBbook}. Observe that several of the examples worked out by Liu and Yau, such as the case of curves in \cite[\S 4]{LY1} and \cite{LY2}, or the case of the conifold algebra \cite{LY4} (see also \cite{LBSymens}) fall in this framework. 

The dynamical aspect of D-branes, that is their Higgsing and un-Higgsing behavior as in \cite{LY1} can also be formalized purely algebraically by enhancing the category $\wis{alg}$ of all $\C$-algebras with a specific class of 2-morphisms.

\begin{definition} 
A {\em 2-morphism} between two  $\C$-algebra morphisms $f$ and $g$
\[
\xymatrix{ R \rtwocell^f_g{\alpha} & S} \]
is a $Z(S)$-algebra mono-morphism between the centralizers in $S$ of the images
\[
\alpha~:~C_S(Im(f)) \rInto C_S(Im(g)) \]
If such a 2-morphism exists we say that $f$ degenerates to $g$ or, equivalently, that $g$ deforms to $f$. 
\end{definition}

In the case of algebraic D-branes, that is when $S=A_n$ is an Azumaya algebra of degree $n$ with center $C=\Oscr(Z)$, these 2-morphisms correspond to the notions of 'Higgsing' and 'un-Higgsing' as in \cite{LY1}. 
Here, the idea is that the Lie-algebra structure of the centralizer $C_A(Im(\beta))$ can be interpreted as the gauge-symmetry group of the D-brane. Higgsing corresponds to symmetry-breaking, that is the gauge-group becomes smaller or, in algebraic terms, a 2-morphism deformation. Likewise, un-Higgsing corresponds to a 2-morphism degeneration of the algebraic D-brane.

\begin{example} There are two extremal cases of algebraic D-branes. The 'maximal' ones coming from epimorphisms
\[
R \rOnto^{\beta} A \]
If $C$ is the center of $A$ such a D-brane determines (and is determined by) a $\wis{spec}(C)$-family of simple $n$-dimensional representations of $R$.
At the other extreme, 'minimal' algebraic D-branes are given by a composition
\[
\beta~:~R \rOnto^{\pi} C \rInto^i A \]
where $C$ is a commutative quotient of $R$ and $A$ is an Azumaya algebra over $C$. Such D-branes essentially determine a $\wis{spec}(C)$-family of one-dimensional representations of $R$.

For a maximal D-brane $\beta_{max} : R \rOnto A$ we have $C_A(Im(\beta_{max})) = Z(A)$ whence for any other algebraic D-brane $g : R \rTo A$ we have a two-cell and hence a degeneration
\[
\xymatrix{ R \rtwocell^{\beta_{max}}_g{\alpha} & A} \]
Likewise, if $\beta_{min} : R \rOnto C \rInto A$ is a minimal D-brane we have $C_A(Im(\beta_{min})) = A$ whence for any other D-brane we have a two-cell and corresponding deformation
\[
\xymatrix{ R \rtwocell^g_{\beta_{min}}{\alpha} & A} \]
\end{example}

Note however that these 2-morphisms do {\em not} equip $\wis{alg}$ with a 2-category structure as defined for example in \cite[XII.3]{MacLane}. Indeed, whereas we have an obvious vertical composition of 2-morphisms there is in general no horizontal composition of 2-morphisms.

Still, these 2-morphisms impose a natural compatibility structure on families of algebraic D-branes. In string-theory one often considers the limit $n \rightarrow \infty$ of n stacks of D-branes located at a subscheme $Z \rInto X$, that is, a family of algebra morphisms $\beta_n : \C[X] \rTo A_n$ where $A_n$ is a degree $n$ Azumaya algebra with center $\C[Z]$. Our ringtheoretical considerations suggest one needs to impose a multiplicative compatibility requirement on such families.

\begin{definition} A family of algebraic D-branes
\[
\beta_n~:~R \rTo A_n \]
such that $Z(A_n)=C$ for all $n$ is said to be {\em compatible} if for every $n | m$ we have a $C$-monomorphism $i_{nm} : A_n \rInto A_m$ and a corresponding 2-morphism
\[
\xymatrix{ R \rtwocell^{\beta_m}_{i_{nm} \circ \beta_n}{\alpha} &  A_m} \]
That is, the algebraic D-brane $\beta_m$ corresponding to the m stack of D-branes at $\wis{spec}(C)$ is a Higgsing of all D-branes $R \rTo A_n \rInto A_m$ for all divisors $n$ of $m$.
\end{definition}

We will give an example of a non-trivial example of a family of algebraic D-branes which are all neither minimal nor maximal.
Consider the 2-dimensional torus $\C[s^{\pm},t^{\pm}]$ and a primitive $n$-th root of unity $q_n = \sqrt[n]{1}$. The {\em quantum torus} is the non-commutative algebra generated by two elements $U_n$ and $V_n$ (and their inverses) satisfying the relations
\[
V_n U_n = q_n U_n V_n \quad U_n^n=s \quad V_n^n=t \]
and we will denote this algebra as $\C_{q_n}[U_n^{\pm},V_n^{\pm}]$. The center of this algebra is easily seen to be $\C[s^{\pm},t^{\pm}]$ and   in fact $\C_{q_n}[U_n^{\pm},V_n^{\pm}]$ is an Azumaya algebra over it of degree $n$. When $m=n.k$ there are obvious embeddings of $\C[s^{\pm},t^{\pm}]$-algebras
\[
i_{m,n}~:~\C_{q_n}[U^{\pm}_n,V^{\pm}_n] \rInto \C_{q_m}[U_m^{\pm},V_m^{\pm}] \qquad \begin{cases} U_n \rTo U_m^k \\ V_n \rTo V_m^k \end{cases} \]

\begin{lemma} Consider the coordinate ring of $GL_2 = \begin{bmatrix} s & u \\ v & t \end{bmatrix}$
\[
\Oscr(GL_2) = \C[s,t,u,v,(st-uv)^{-1}] \]
 then the $\C$-algebra morphisms 
 $\beta_n : \Oscr(GL_2) \rTo \C_{q_n}[U_n^{\pm},V_n^{\pm}]$ defined by
 \[
 \beta=(\beta_n)~:~\begin{cases} u \mapsto 0 \\ v \mapsto 0 \\ s \mapsto s \\ t \mapsto V_n \end{cases} \]
 is a compatible family of algebraic D-branes on the maximal torus $T_2 \rInto GL_2$.
\end{lemma}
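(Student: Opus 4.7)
The plan is to verify in turn: (a) each $\beta_n$ is a well-defined algebraic $D$-brane of degree $n$ on $T_2$; (b) the embeddings $i_{m,n}$ are well-defined $C$-algebra monomorphisms; (c) the required 2-morphism witnessing compatibility exists. For (a), under $\beta_n$ the determinant $st-uv$ maps to $sV_n = U_n^n V_n$, which is invertible in $A_n$ because both $U_n$ and $V_n$ are units. Hence the assignment on generators extends uniquely from the polynomial ring $\C[s,t,u,v]$ to the localization $\Oscr(GL_2)$. Because the paper has already recorded that $A_n$ is Azumaya of degree $n$ with center $C=\C[s^\pm,t^\pm]=\Oscr(T_2)$, Theorem~1 identifies $\beta_n$ with a $\wis{spec}(C)$-point of $[\wis{rep}_n \Oscr(GL_2)/PGL_n]$, i.e.\ with an algebraic $D$-brane of degree $n$ on $T_2$.

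For (b), a direct induction on the exponents yields the general commutation rule $V_m^a U_m^b = q_m^{ab}U_m^b V_m^a$ inside $A_m$. Applying it with $a=b=k$ shows the assignment $U_n\mapsto U_m^k$, $V_n\mapsto V_m^k$ respects the quantum-torus relation (for an appropriately compatible choice of roots of unity), and the identifications $U_n^n\mapsto U_m^{nk}=U_m^m=s$ and $V_n^n\mapsto V_m^{nk}=V_m^m=t$ confirm that $i_{m,n}$ is a $C$-algebra monomorphism.

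The substance of the argument is (c), the centralizer computation. First note that $Im(\beta_m)=\C[s^\pm,V_m^\pm]$ and $Im(i_{m,n}\circ \beta_n)=\C[s^\pm,V_m^{\pm k}]$. Expanding a general element of $A_m$ as $\sum c_{ij}U_m^i V_m^j$ and applying the commutation rule of (b), such an element centralizes $V_m^r$ exactly when $q_m^{ri}=1$ for every occurring index $i$. Specializing to $r=1$ and $r=k$ then yields
\begin{align*}
C_{A_m}(Im(\beta_m)) &= \C[U_m^{\pm m},V_m^\pm] = \C[s^\pm,V_m^\pm], \\
C_{A_m}(Im(i_{m,n}\circ \beta_n)) &= \C[U_m^{\pm n},V_m^\pm].
\end{align*}
Since $s=U_m^m=(U_m^n)^k$, the first centralizer sits as a $Z(A_m)$-subalgebra of the second, and the natural inclusion $\alpha$ between them is precisely the 2-morphism from $\beta_m$ to $i_{m,n}\circ \beta_n$ required by the compatibility definition.

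The main technical point is the centralizer calculation in (c); once the divisibility condition $m\mid ri$ on the $U_m$-exponents is extracted, the required inclusion of centralizers is just the obvious inclusion of two skew Laurent-polynomial subalgebras inside $A_m$, and compatibility follows immediately. A secondary book-keeping issue in (b) is the precise compatibility one must impose on the primitive roots of unity $q_n$ and $q_m$ in order that $V_m^k U_m^k = q_n U_m^k V_m^k$ actually holds; this is essentially a matter of fixing a coherent system of roots.
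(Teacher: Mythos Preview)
Your proof is correct and follows the same strategy as the paper: identify the images $Im(\beta_m)=\C[s^\pm,V_m^\pm]$ and $Im(i_{m,n}\circ\beta_n)=\C[s^\pm,V_m^{\pm k}]$, compute their centralizers in $A_m$, and observe the required inclusion. You supply more detail than the paper (the well-definedness checks in (a) and (b), and the explicit use of the commutation rule to extract the divisibility condition $n\mid i$), and your description of the larger centralizer as $\C[U_m^{\pm n},V_m^\pm]$ is in fact cleaner than the paper's tensor-product expression $\C[s^\pm,V_m^{\pm k}]\otimes_{\C[s^\pm,t^\pm]}\C_{q_k}[U_m^{\pm n},V_m^{\pm n}]$, which as written only recovers $V_m^{\gcd(k,n)}$ rather than $V_m$ when $\gcd(k,n)>1$; your direct computation avoids this issue.
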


\begin{proof} Clearly the maps  $\beta_n$ are $\C$-algebra morphisms from $\Oscr(GL_2)$ to the Azumaya algebra $\C_{q_n}[U_n^{\pm},V_n^{\pm}]$ whence they are algebraic D-branes. Remains for every $m = n.k$ to compare the centralizers of the images at level $m$ with those of the image with the inclusion $i_{m,n}$. One easily verifies that
\[
Im(\beta_m) = \C[s^{\pm},V_m^{\pm}] \quad \text{whereas} \quad Im(i_{m,n} \circ \beta_n) = \C[s^{\pm},V_m^{\pm k}] \]
The centralizer of $Im(\beta_m)$ is equal to itself, whereas 
\[
C(Im(i_{m,n} \circ \beta_n) = \C[s^{\pm},V_m^{\pm k}] \otimes_{\C[s^{\pm},t^{\pm}]} \C_{q_k}[U_m^{\pm n},V_m^{\pm n}] \]
and so these form indeed a compatible family of algebraic D-branes on $T_2$.
\end{proof}

\section{Azumaya noncommutative geometry}

The main problem studied in the papers \cite{LY1}-\cite{LY6} by Chien-Hao Liu and Shing-Tung Yau is to associate some noncommutative geometry to the morphism $\beta  : \Oscr(X) \rTo A_n$ (or more generally $\beta  : R \rTo A_n$) describing the embedding of the n-stack of D-branes in space-time. That is, one wants to associate geometric objects (such as varieties, topological spaces and sheaves on these)  that will allow us to reconstruct $\beta $ from the geometric data. 

\subsection{Noncommutative structure sheaves}

We will rephrase Liu and Yau's Azumaya noncommutative geometry, based on 'surrogates' and the 'noncommutative cloud', in classical noncommutative algebraic geometry of pi-algebras, as developed by Fred Van Oystaeyen and Alain Verschoren \cite{LNM887} in the early 80-ties.

Let $S$ be a finitely generated Noetherian $\C$-algebra satisfying all polynomial identities of degree $n$. Observe that $\int_n R$, $A_n$ and its subalgebras 
\[
A_c=Im(\beta ) C \qquad \text{and} \qquad  A_e=Im(\beta ) C_A(Im(\beta )) \]
 all satify these requirements. With $\wis{spec}(S)$ we denote the set of all twosided prime ideals of $S$ which becomes a topological space by equipping it with the Zariski topology having as typical closed sets $\mathbb{V}(I) = \{ P \in \wis{spec}(S)~|~I \subset P \}$ for $I$ a twosided ideal of $S$. However, this twosided prime spectrum is not necessarily functorial, that is, if $S \rTo^f T$ is a $\C$-algebra morphism between suitable algebras, then $f^{-1}(P)$ for a twosided prime ideal $P \triangleleft T$ does not have to be a prime ideal in $S$. 
Still, the twosided prime spectrum is functorial whenever $S \rTo^f T$ is an {\em extension}, that is, if $T = Im(f) C_T(Im(f))$ by \cite[Thm. II.6.5]{ProcesiBook}.

In \cite[V.3]{LNM887} Van Oystaeyen and Verschoren constructed a noncommutative structure sheaf $\mathcal{O}^{bi}_S$ on $\wis{spec}(S)$ by means of localization in the category of $S$-bimodules. The main properties of this construction are \cite[Thm. V.3.17 and Thm. V.3.36]{LNM887}
\begin{itemize}
\item{One recovers the algebra back from the global sections $S = \Gamma(\wis{spec}(S),\mathcal{O}^{bi}_S)$}
\item{Every extension $S \rTo^f T$ defines a morphism of ringed spaces
\[
(\wis{spec}(T), \mathcal{O}^{bi}_T) \rTo (\wis{spec}(S),\mathcal{O}_S^{bi}) \]}
\end{itemize}
In fact, the results are stated there only for prime Noetherian rings satisfying all polynomial identities of degree $n$ but extend to the case of interest here as our rings are finite modules over an affine center and hence we have enough central localizations. For more details, see \cite{FredBook}.

An algebraic $D$-brane $R \rTo^{\beta} A$ of degree $n$ is not necessarily an extension, but the morphisms below defined by it are :
\[
\int_n R \rTo^{\beta } A_c = Im(\beta ) C \quad \text{and} \quad \int_n R \rTo^{\beta } A_e = Im(\beta ) C_A(Im(\beta )) \]

\begin{theorem}
Associate to the D-brane either of these noncommutative geometric data
\[
(\wis{spec}(A_{\ast}), \mathcal{O}^{bi}_{A_{\ast}}) \rTo (\wis{spec}(\int_n R),\mathcal{O}^{bi}_{\int_n R}) \]
It contains enough information to reconstruct $\beta ~:~\int_n R \rTo A$ by taking global sections, and hence by compositing with the universal morphism $R \rTo \int_n R$ also the D-brane.
\end{theorem}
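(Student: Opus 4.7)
The plan is to reduce the statement to a straight application of the Van~Oystaeyen--Verschoren reconstruction machinery quoted just before the theorem. The only thing that really requires checking is that the two relevant $\C$-algebra morphisms
\[
\int_n R \rTo^{\beta} A_c \qquad \text{and} \qquad \int_n R \rTo^{\beta} A_e
\]
are \emph{extensions} in the sense of \cite[Thm. II.6.5]{ProcesiBook}; once that is in place, the functoriality of $\mathcal{O}^{bi}_{-}$ on extensions automatically produces the desired morphism of ringed spaces, and the global sections identity $S = \Gamma(\wis{spec}(S),\mathcal{O}^{bi}_S)$ recovers the algebra morphism.

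First I would verify the extension property for $A_c = Im(\beta)\cdot C$. The image of $\int_n R$ in $A$ contains $Im(\beta)$ (via precomposition with $R \rTo \int_n R$) as well as the image of the trace map, which lands in the center $C$; consequently the induced map $\int_n R \rTo A_c$ is already surjective, so the extension condition $A_c = Im \cdot C_{A_c}(Im)$ holds trivially. For $A_e = Im(\beta)\cdot C_A(Im(\beta))$, observe that $C_A(Im(\beta)) \subseteq A_e$ by construction, hence $C_{A_e}(Im(\beta)) = C_A(Im(\beta))$, and therefore $A_e = Im(\beta)\cdot C_{A_e}(Im(\beta))$, which exhibits $\int_n R \rTo A_e$ as an extension.

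With the extension property in hand, the second bullet of the cited Van Oystaeyen--Verschoren theorem produces the morphism of ringed spaces
\[
(\wis{spec}(A_{\ast}),\mathcal{O}^{bi}_{A_{\ast}}) \rTo (\wis{spec}(\int_n R),\mathcal{O}^{bi}_{\int_n R})
\]
whose effect on global sections is, by the first bullet, precisely the original algebra map $\int_n R \rTo A_{\ast}$. Composing with the inclusion $A_{\ast} \rInto A$ and then precomposing with the universal trace-algebra morphism $R \rTo \int_n R$ (which, via the adjunction recalled in the paper, identifies $Hom_{\wis{alg@n}}(\int_n R,A)$ with the $\C$-algebra maps $R\rTo A$ when $A \in \wis{alg@n}$) returns the original D-brane $\beta : R \rTo A$.

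The main obstacle is really only the extension verification above; once it is settled, everything else is a bookkeeping exercise that simply invokes the two quoted properties of $\mathcal{O}^{bi}_{-}$. A subtlety worth mentioning is that \cite{LNM887} states these results for prime Noetherian PI-rings, whereas $\int_n R$ and $A_{\ast}$ are only module-finite over affine centers; the paper already flags this by appealing to \cite{FredBook} to note that the existence of enough central localizations is what is genuinely needed, so no extra work is required here.
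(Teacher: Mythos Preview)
Your approach is exactly what the paper intends: the theorem is stated there without proof, as an immediate consequence of the two Van Oystaeyen--Verschoren properties once one knows that $\int_n R \to A_\ast$ is an extension (a fact the paper asserts just before the theorem, also without justification). So your write-up is in fact more detailed than the paper's own treatment.

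One small correction to your extension check for $A_c$: the map $\int_n R \to A_c$ is \emph{not} surjective in general. The image of $\int_n R$ in $A$ is $Im(\beta)\cdot C'$, where $C' \subseteq C$ is the sub-$\C$-algebra generated by traces of products of elements of $Im(\beta)$, and there is no reason for $C'=C$ (e.g.\ let $\beta$ factor through $\C \hookrightarrow A$). The extension property nevertheless holds for the simpler reason that $C \subseteq Z(A_c) \subseteq C_{A_c}(Im')$, whence $Im'\cdot C_{A_c}(Im') \supseteq Im(\beta)\cdot C = A_c$. The same observation---that the image $Im'$ of $\int_n R$ differs from $Im(\beta)$ only by central elements and therefore has the same centralizer in $A_e$---also tightens your $A_e$ argument, where you tacitly used $Im(\beta)$ in place of $Im'$.
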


The fact that noncommutative prime spectra and bimodule structure sheaves behave only functorial with respect to $\C$-algebra morphisms which are extensions, explains the notion of a 'noncommutative cloud' in \cite{LY1}. In our language, the noncommutative cloud of a $C$-Azumaya algebra $A$ is the set
\[
\wis{cloud}(A) = \bigsqcup_{A_{\ast}} \wis{spec}(A_{\ast}) \]
where the disjoint union of noncommutative prime spectra is taken over all subalgebras $A_{\ast}$ satisfying
\[
C \subset Z(A_{\ast}) \subset A_{\ast} \subset A \]
and such subalgebras are called 'surrogates' in \cite{LY1}. The point being that we can now associate to an algebraic D-brane of degree $n$, $R \rTo^{\beta} A$ the partially defined map which is well defined because $\int_n R \rTo A_e$ is an extension
\[
\wis{cloud}(A) \lInto \wis{spec}(A_e) \rTo \wis{spec}(\int_n R) \]
and by adorning $\wis{cloud}(A)$ componentswise with noncommutative structure sheaves $\mathcal{O}^{bi}_{A_{\ast}}$, this partially defined map contains enough information to reconstruct the D-brane.

\subsection{Noncommutative thin schemes}

 In \cite[\S I.2]{KontSoib} Maxim Kontsevich and Yan Soibelman define a {\em noncommutative thin scheme} to be a covariant functor commuting with finite projective limits
\[
\wis{X}~:~\wis{fd-alg} \rTo \wis{sets} \]
from the category $\wis{fd-alg}$ of all finite dimensional $\C$-algebras to the category $\wis{sets}$ of all sets. They prove \cite[Thm. 2.1.1]{KontSoib} that every noncommutative thin scheme is represented by a $\C$-coalgebra. That is, there is a $\C$-coalgebra $C_{\wis{X}}$ associated to the noncommutative thin scheme $\wis{X}$ having the property that there is a natural one-to-one correspondence
\[
\wis{X}(B) = \wis{alg}(B,C_{\wis{X}}^*) \]
for every finite dimensional $\C$-algebra $B$. Here, $C_{\wis{X}}^*$ is the dual algebra of all linear functionals on $C_{\wis{X}}$. $C_{\wis{X}}$ is called the {\em coalgebra of distributions} on $\wis{X}$ and the {\em noncommutative algebra of functions} on the thin scheme $\wis{X}$ is defined to be $\C[\wis{X}] = C_{\wis{X}}^*$.
For a $\C$-algebra $R$, it is not true in general that the linear functionals $R^*$ are a coalgebra, but the {\em dual coalgebra} $R^o$ is, where 
\[
R^0 = \{ f \in R^*~|~Ker(f)~\text{contains a twosided ideal of finite codimension}~\} \]
Kostant duality, see for example \cite[Thm. 6.0.5]{Sweedler}, asserts that the functors
\[
\xymatrix{\wis{alg} \ar@/^/[rr]^{\circ} & & \wis{coalg} \ar@/^/[ll]^{\ast}} \]
are adjoint. That is, for every $\C$-algebra $R$ and $\C$-coalgebra $C$ there is a natural one-to-one correspondence between  the homomorphisms
\[
\wis{alg}(R,C^*) = \wis{coalg}(C,A^{\circ}) \]
For an affine $\C$-algebra $R$ we define the contravariant functor 
\[
\wis{rep}_R~:~\wis{fd-coalg} \rTo \wis{sets} \qquad C \mapsto \wis{alg}(R,C^*) \]
describing the finite dimensional representations of $R$, see \cite[Example 2.1.9]{KontSoib}. As taking the linear dual restricts Koszul duality to an aniti-equivalence between the categories $\wis{fd-alg}$ and $\wis{fd-coalg}$, we can describe $\wis{rep}_R$ as the noncommutative thin scheme  represented by the dual coalgebra $R^{\circ}$ as
\[
\wis{rep}_R~:~\wis{fd-alg} \rTo \wis{sets} \qquad B=C^* \mapsto \wis{alg}(R,B) = \wis{coalg}(C,R^{\circ}) \]

\begin{definition} The {\em noncommutative affine scheme} $\wis{rep}_R$ is the noncommutative thin scheme represented by the dual coalgebra $R^{\circ}$ of the affine $\C$-algebra $R$.
\end{definition}

By \cite[Lemma 6.0.1]{Sweedler} for every $\C$-algebra morphism $f \in \wis{alg}(R,B)$, the dual map determines a $\C$-coalgebra morphism $f^* \in \wis{coalg}(B^{\circ},R^{\circ})$. In particular, to an algebraic D-brane $\beta : R \rTo A$ we can associate a morphism between their noncommutative affine schemes
\[
\beta^*~:~\wis{rep}_A \rTo \wis{rep}_R \]
determined by the coalgebra map $\beta^* : A^{\circ} \rTo R^{\circ}$. 

\begin{theorem} \label{reconstructbrane} The morphism $\beta^* : \wis{rep}_A \rTo \wis{rep}_R$ between the noncommutative thin schemes allows to reconstruct the algebraic D-brane $\beta : R \rTo A$.
\end{theorem}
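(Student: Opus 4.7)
The plan is to invert the assignment $\beta \mapsto \beta^*$ using Yoneda together with Kostant duality. First, since $\wis{rep}_R$ and $\wis{rep}_A$ are, as functors $\wis{fd-alg} \rTo \wis{sets}$, represented by the dual coalgebras $R^{\circ}$ and $A^{\circ}$, Yoneda tells us that a natural transformation $\beta^* : \wis{rep}_A \rTo \wis{rep}_R$ is the same data as a $\C$-coalgebra morphism $\beta^* : A^{\circ} \rTo R^{\circ}$. Hence the problem reduces to recovering the algebra map $\beta : R \rTo A$ from this coalgebra map.

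Next I would take linear duals, so that $\beta^*$ becomes a $\C$-algebra morphism $(\beta^*)^* : R^{\circ*} \rTo A^{\circ*}$, and combine this with the canonical evaluation algebra morphisms $i_R : R \rTo R^{\circ*}$ and $i_A : A \rTo A^{\circ*}$. By naturality of Kostant duality these fit into the commutative square
\[
\xymatrix{ R \ar[r]^{\beta} \ar[d]_{i_R} & A \ar[d]^{i_A} \\ R^{\circ*} \ar[r]^{(\beta^*)^*} & A^{\circ*} }
\]
so that $i_A \circ \beta = (\beta^*)^* \circ i_R$. The right-hand side of this identity is entirely computable from the coalgebra map $\beta^*$, and hence every value $\beta(r) \in A$ is uniquely pinned down as the unique preimage under $i_A$ of $(\beta^*)^*(i_R(r))$, \emph{provided} $i_A$ is injective. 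This would recover $\beta$ and therefore the algebraic D-brane.

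The one nontrivial step, and hence the main obstacle, is to verify that $i_A$ is injective for $A$ an Azumaya algebra of degree $n$ over an affine commutative $\C$-algebra $C$. By the very definition of the dual coalgebra, $\ker(i_A)$ equals the intersection of all twosided ideals of $A$ of finite codimension over $\C$. Since affine commutative $\C$-algebras are Jacobson we have $\bigcap_{\mathfrak{m}} \mathfrak{m} = 0$ where $\mathfrak{m}$ ranges over the maximal ideals of $C$, and since every quotient $A/\mathfrak{m}A \simeq M_n(\C)$ is finite-dimensional over $\C$, one obtains an embedding $A \rInto \prod_{\mathfrak{m}} A/\mathfrak{m}A$. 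Thus finite-codimensional twosided ideals separate points of $A$, which is exactly injectivity of $i_A$, and the reconstruction is complete. The Azumaya-over-affine-centre hypothesis built into the notion of algebraic D-brane is essential at this step: for a more general noncommutative $A$ with few finite-codimensional twosided ideals the coalgebra $A^{\circ}$ would be too small to detect $\beta$ uniquely, and a larger coalgebra of distributions (for instance one incorporating the Procesi trace data of $\int_n R$) would be required.
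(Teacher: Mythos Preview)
Your argument is correct and follows the same skeleton as the paper's proof: pass from the natural transformation to the coalgebra map $A^{\circ} \to R^{\circ}$, dualize to an algebra map $R^{\circ *} \to A^{\circ *}$, and precompose with the canonical $R \to R^{\circ *}$. Where you and the paper diverge is in the final reconstruction step. The paper first invokes its Example~\ref{exampleAzu} to identify $A^{\circ *} \simeq \prod_{\mathfrak{m}} M_n(\hat{C}_{\mathfrak{m}})$ explicitly, reads the components of the composite $R \to \prod_{\mathfrak{m}} M_n(\hat{C}_{\mathfrak{m}})$ as the local data of $\beta$ at every closed point, and then appeals to \'etale descent to glue back the global map $\beta : R \to A$. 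You instead observe that the commuting square $i_A \circ \beta = (\beta^{*})^{*} \circ i_R$ determines $\beta$ outright once $i_A$ is injective, and you prove injectivity directly from the Jacobson property of the affine centre together with $A/\mathfrak{m}A \simeq M_n(\C)$.

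Your route is more elementary: it sidesteps both the Takeuchi analysis of $A^{\circ}$ and the descent argument, using only that finite-codimensional ideals of an Azumaya algebra over an affine base separate points. The paper's route, on the other hand, yields more: it exhibits the reconstructed $\beta$ concretely as a compatible family of local morphisms into matrix algebras over complete local rings, which is closer to the geometric picture the paper is after. Your closing remark that the Azumaya-over-affine-centre hypothesis is what makes $i_A$ injective is exactly the right diagnosis of where the D-brane assumption enters.
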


In order to prove this we need to describe the dual coalgebras $A^o$ and $R^o$.
Recall that a coalgebra $D$ is said to be simple if it has no proper nontrivial sub-coalgebras. Every simple $\C$-coalgebra is finite dimensional and as $D^*$ is a simple $\C$-algebra, we have that $D \simeq M_n(\C)^*$, the full matrix coalgebra that is, $\sum_{i,j} \C e_{ij}$ with
\[
\Delta(e_{ij}) = \sum_{k=1}^n e_{ik} \otimes e_{kj} \quad \text{and} \quad \epsilon(e_{ij}) = \delta_{ij} \]
The coradical $corad(C)$ of a coalgebra $C$ is the direct sum of all simple subcoalgebras of $C$. It follows from Kostant duality that for any affine $\C$-algebra $R$ we have
\[
corad(R^o) = \oplus_{S \in \wis{simp} R} M_n(\C)^*_S \]
where $\wis{simp} R$ is the set of isomorphism classes of finite dimensional simple $R$-representations and the factor of $corad(R^o)$ corresponding to a simple representation $S$ is isomorphic to the matrix coalgebra $M_n(\C)^*$ if $dim(S)=n$.

In algebra, one can resize idempotents by Morita-theory and hence replace full matrices by the basefield.  In coalgebra-theory there is an analogous duality known as {\em Takeuchi equivalence}, see \cite{Takeuchi}.
The isotypical decomposition of $corad(R^o)$ as an $R^o$-comodule is of the form $\oplus_S C_S^{\oplus n_S}$, the sum again taken over all simple finite dimensional $R$-representations. Take the $R^o$-comodule
$E= \oplus_S C_S$ and its {\em coendomorphism coalgebra} 
\[
R^{\dagger} = coend^{R^o}(E) \]
then Takeuchi-equivalence (see for example \cite[\S 4, \S 5]{Chin} and the references contained in this paper for more details) asserts that $R^o$ is Takeuchi-equivalent to the coalgebra $R^{\dagger}$ which is {\em pointed}, that is, $corad(R^{\dagger}) = \C~\wis{simp}(R) = \oplus_S \C g_S$ with one {\em group-like} element $g_S$ for every simple finite dimensional $R$-representation. Remains to describe the structure of the full  basic coalgebra $R^{\dagger}$.

\begin{example} \label{exampleAzu} For the affine Azumaya algebra $A$ of degree $n$ over its affine center $C$, we know that all finite dimensional simple $A$-representations are $n$-dimensional and are parametrized by the maximal ideals $\mathfrak{m} \in \wis{max}(C)$. That is, $corad(A^o) = \oplus_{\mathfrak{m}} M_n(\C)^*$ and $A^o$ is Takeuchi-equivalent to the pointed coalgebra
\[
A^{\dagger} = C^o \quad \text{with} \quad corad(A^{\dagger}) = \oplus_{\mathfrak{m}} \C g_{\mathfrak{m}} \]
By \cite[Prop. 8.0.7]{Sweedler} we know that any cocommutative pointed coalgebra is the direct sum of its pointed irreducible components (at the algebra level this says that a commutative semi-local algebra is the direct sum of local algebras). Therefore,
\[
A^{\dagger} = C^0 = \oplus_{\mathfrak{m}} C^0_{\mathfrak{m}} \]
where $C^0_{\mathfrak{m}}$ is a pointed irreducible cocommutative coalgebra and as such is a sub-coalgebra of the enveloping coalgebra of the abelian Lie algebra on the Zariski tangent space $(\mathfrak{m}/\mathfrak{m}^2)^*$. That is, we recover the maximal spectrum $\wis{max}(C)$ of the center $C$ from $A^{\dagger}$. But then, the dual algebra
\[
A^{\dagger \ast} = C^{\circ \ast} = \prod_{\mathfrak{m}} \hat{C}_{\mathfrak{m}} \]
the direct sum of the completions of $C$ at all maximal ideals $\mathfrak{m}$. Also, the double dual algebra
\[
A^{o \ast} = \prod_{\mathfrak{m}} M_n(\hat{C}_{\mathfrak{m}}) \]
\end{example}

For a general affine noncommutative $\C$-algebra $R$, the description of the pointed coalgebra $R^{\dagger}$ is more complicated as there can be non-trivial extensions between non-isomorphic finite dimensional simple $R$-representations (note that this does not happen for Azumaya algebras).

For a (possibly infinite) quiver $\vec{Q}$ we define the {\em path coalgebra} $\C \vec{Q}$ to be the vectorspace $\oplus_p \C p$ with basis all oriented paths $p$ in the quiver $\vec{Q}$ (including those of length zero, corresponding to the vertices) and with structural maps induced by
\[
\Delta(p) = \sum_{p=p'p"} p' \otimes p" \qquad \text{and} \qquad \epsilon(p) = \delta_{0,l(p)} \]
where $p'p"$ denotes the concatenation of the oriented paths $p'$ and $p"$ and where $l(p)$ denotes the length of the path $p$. Hence, every vertex $v$ is a group-like element and for an arrow $\xymatrix{\vtx{v} \ar[r]^a & \vtx{w}}$ we have $\Delta(a) = v \otimes a + a \otimes w$ and $\epsilon(a)=0$, that is, arrows are skew-primitive elements.

For every natural number $i$, we define the {\em $ext^i$-quiver} $\overrightarrow{\wis{ext}}^i_R$ to have one vertex $v_S$ for every $S \in \wis{simp}(R)$ and such that the number of arrows from $v_S$ to $v_T$ is equal to the dimension of the space $Ext^i_R(S,T)$. With $\wis{ext}^i_R$ we denote the $\C$-vectorspace on the arrows of $\overrightarrow{\wis{ext}}^i_R$.

The {\em Yoneda-space} $\wis{ext}^{\bullet}_R = \oplus \wis{ext}^i_R$ is endowed with a natural $A_{\infty}$-structure \cite{Keller}, defining a linear map (the {\em homotopy Maurer-Cartan map}, \cite{Segal})
\[
\mu = \oplus_i m_i~:~\C \overrightarrow{\wis{ext}}^1_R \rTo \wis{ext}^2_R \]
from the path coalgebra $\C \overrightarrow{\wis{ext}}^1_R$ of the $ext^1$-quiver to the vectorspace $\wis{ext}^2_R$, see \cite[\S 2.2]{Keller} and \cite{Segal}.

\begin{theorem} The dual coalgebra $R^o$ is Takeuchi-equivalent to the pointed coalgebra $R^{\dagger}$ which is  the sum of all subcoalgebras contained in the kernel of the linear map 
\[
\mu = \oplus_i m_i~:~\C \overrightarrow{\wis{ext}}^1_R \rTo \wis{ext}^2_R \]
 determined by the $A_{\infty}$-structure on the Yoneda-space $\wis{ext}^{\bullet}_R$.
\end{theorem}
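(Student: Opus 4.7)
My plan is to first reduce the statement, via Takeuchi equivalence already invoked in the excerpt, to the identification of the basic pointed coalgebra $R^\dagger$ itself. Since $R^\dagger$ is pointed with $\mathrm{corad}(R^\dagger) = \bigoplus_{S \in \wis{simp}(R)} \C g_S$, the coradical filtration $C_0 \subseteq C_1 \subseteq C_2 \subseteq \cdots$ controls everything, and the plan is to describe each filtration quotient in Ext-theoretic terms and then assemble the total coalgebra.

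First I would handle $C_1$. A standard dualization of the classical interpretation of $\mathrm{Ext}^1_R(S,T)$ as isomorphism classes of self-extensions shows that the space of $(g_S,g_T)$-skew-primitives modulo $\C g_S + \C g_T$ inside $R^\dagger$ is canonically $\mathrm{Ext}^1_R(T,S)^*$ (or its opposite, depending on convention). Combined with cocommutativity-free pointedness, this identifies $C_1 \subseteq R^\dagger$ with the subcoalgebra of the path coalgebra $\C \overrightarrow{\wis{ext}}^1_R$ spanned by vertices and arrows. Because the path coalgebra is the cofree pointed coalgebra on the quiver of arrows (universal among pointed coalgebras with prescribed $C_1$), the inclusion $C_1 \hookrightarrow \C \overrightarrow{\wis{ext}}^1_R$ extends uniquely to a coalgebra map $R^\dagger \rTo \C \overrightarrow{\wis{ext}}^1_R$, which is injective because a coalgebra map out of a pointed coalgebra is injective as soon as it is injective on $C_1$ (standard coradical-filtration argument, cf. Sweedler).

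Next I would characterize the image. The key input is the $A_\infty$-reconstruction theorem (Keller, Segal, Kontsevich-Soibelman): the derived category of finite-dimensional $R$-modules is equivalent, via the bar construction, to twisted complexes over the minimal $A_\infty$-algebra $\wis{ext}^\bullet_R$, and under this equivalence the formal moduli of objects is represented by the Maurer-Cartan locus. Concretely, the bar construction sends an $A_\infty$-structure $(m_i)$ on $\wis{ext}^\bullet_R$ to a codifferential on the tensor coalgebra of $\wis{ext}^1_R$; restricted to its pointed form this is the path coalgebra $\C \overrightarrow{\wis{ext}}^1_R$, and the Maurer-Cartan condition for a coalgebra map $D \rTo \C \overrightarrow{\wis{ext}}^1_R$ defined on a subcoalgebra $D$ becomes precisely the condition that $D$ lies in the kernel of $\mu = \bigoplus_i m_i$. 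Thus the subcoalgebras of $\C \overrightarrow{\wis{ext}}^1_R$ contained in $\ker(\mu)$ are exactly those that arise from honest formal deformations of semisimple $R$-modules, i.e.\ from subcoalgebras of $R^\dagger$.

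Finally I would conclude by maximality. The sum of all subcoalgebras of $\C \overrightarrow{\wis{ext}}^1_R$ contained in $\ker(\mu)$ is itself a subcoalgebra $K \subseteq \C \overrightarrow{\wis{ext}}^1_R$, and the injection $R^\dagger \hookrightarrow \C \overrightarrow{\wis{ext}}^1_R$ constructed above lands in $K$. Conversely, by the representing property of $R^o$ (any finite-dimensional comodule structure compatible with the $A_\infty$-data comes from a genuine finite-dimensional $R$-module, which dualizes to a map into $R^o$ hence into $R^\dagger$), every finite-dimensional subcoalgebra of $K$ factors through $R^\dagger$. Taking the union over all finite-dimensional subcoalgebras, which exhaust both $K$ and $R^\dagger$ by the Fundamental Theorem of Coalgebras, gives $R^\dagger = K$. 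The step I expect to be the real obstacle is the second one: tying the formal Maurer-Cartan interpretation of $\mu$ to the bar-construction description of $R^\dagger$ in a way that is sharp enough to give equality rather than merely an inclusion, which requires citing the minimal-model / homotopy-transfer machinery of Keller and Segal in the precise form needed.
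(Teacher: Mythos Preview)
Your proposal is correct and, like the paper, ultimately rests on the Keller/Segal $A_\infty$-reconstruction machinery, but the route is genuinely different. The paper's proof first reduces to finite subquivers (every subcoalgebra is a union of finite-dimensional ones, and any finite-dimensional $R$-module involves only finitely many simples) and then argues entirely on the \emph{algebra} side: for a finite semisimple $M=S_1\oplus\cdots\oplus S_r$ with $\mathfrak{m}=\ker\pi_M$, it invokes Segal's theorem that the completion $\hat{R}_{\mathfrak{m}}$ is Morita-equivalent to the completed path algebra of the dual $\wis{ext}^1$-quiver modulo the ideal generated by $\mathrm{Im}(\mu_M^*)$, and then simply asserts that the claimed coalgebra statement is the dual of this. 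Your argument instead remains on the coalgebra side throughout: you use the coradical filtration and the cofree property of the path coalgebra to manufacture the embedding $R^{\dagger}\hookrightarrow\C\overrightarrow{\wis{ext}}^1_R$ directly, and then identify its image via the bar-construction/Maurer-Cartan interpretation of $\mu$. What the paper's approach buys is brevity---one citation plus a one-line dualization---at the cost of leaving the actual coalgebra identification implicit; what yours buys is an explicit coalgebraic picture (the injection on $C_1$ forcing global injectivity, the Fundamental Theorem of Coalgebras for the surjectivity direction) that makes clear exactly where the $A_\infty$-input is consumed. The step you flag as the real obstacle is precisely the place where the paper cites Segal's Theorem~2.12, so your assessment of the difficulty is accurate.
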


\begin{proof}
We can reduce to finite subquivers as any subcoalgebra is the limit of finite dimensional subcoalgebras and because any finite dimensional $R$-representation involves only finitely many simples. Hence, the statement is a global version of the result on finite dimensional algebras due to B. Keller \cite[\S 2.2]{Keller}. 

Alternatively, we can use the results of E. Segal \cite{Segal}. Let $S_1,\hdots,S_r$ be distinct simple finite dimensional $R$-representations and consider the semi-simple module $M=S_1 \oplus \hdots \oplus S_r$ which determines an algebra epimorphism
\[
\pi_M~:~R \rOnto M_{n_1}(\C) \oplus \hdots \oplus M_{n_r}(\C) = B \]
If $\mathfrak{m} = Ker(\pi_M)$, then the $\mathfrak{m}$-adic completion $\hat{R}_{\mathfrak{m}} = \underset{\leftarrow}{lim}~R/\mathfrak{m}^n$ is an augmented $B$-algebra and we are done if we can describe its finite dimensional (nilpotent) representations. Again, consider the $A_{\infty}$-structure on the Yoneda-algebra $Ext^{\bullet}_R(M,M)$ and the quiver on $r$-vertices $\overrightarrow{\wis{ext}}^1_R(M,M)$ and the homotopy Mauer-Cartan map
\[
\mu_M = \oplus_i m_i~:~\C \overrightarrow{\wis{ext}}^1_R(M,M) \rTo Ext^2_R(M,M) \]
Dualizing we get a subspace $Im(\mu_M^*)$ in the path-{\em algebra} $\C \overrightarrow{\wis{ext}}^1_R(M,M)^*$ of the dual quiver. Ed Segal's main result \cite[Thm 2.12]{Segal} now asserts that $\hat{R}_{\mathfrak{m}}$ is Morita-equivalent to
\[
\hat{R}_{\mathfrak{m}} \underset{M}{\sim} \frac{(\C \overrightarrow{\wis{ext}}^1_R(M,M)^*)^{\hat{}}}{(Im(\mu_M^*))} \]
where $(\C \overrightarrow{\wis{ext}}^1_R(M,M)^*)^{\hat{}}$ is the completion of the path-algebra at the ideals generated by the paths of positive length. The statement above is the dual coalgebra version of this.
\end{proof}

\noindent
{\bf Proof of theorem~\ref{reconstructbrane} : } The morphism between the thin noncommutative schemes
\[
\beta^* : \wis{rep}_A \rTo \wis{rep}_R \]
corresponds to the coalgebra-map dual to $\beta$ 
\[
\beta^o : A^o \rTo R^o \]
Dualizing this map again we obtain a $\C$-algebra map, and composing with the natural map $R \rTo R^{o \ast}$ and the observations of example~\ref{exampleAzu} we obtain an algebra map
\[
R \rTo R^{o \ast} \rTo^{\beta^{o \ast}}  A^{o \ast} = \prod_{\mathfrak{m}} M_n(\hat{C}_{\mathfrak{m}}) \]
the components of which are the maps from the global sections of the structure sheaf of $R$ in the \'etale topology to the stalks of the structure sheaf of $A$ in the \'etale topology, induced by $\beta$. By \'etale descent we can therefore reconstruct the algebraic D-brane $\beta : R \rTo A$ from them.

\end{document}